\theoremstyle{ams}
\newtheorem{theorem}{Theorem}[section]
\newtheorem{proposition}[theorem]{Proposition}
\newtheorem{lemma}[theorem]{Lemma}
\theoremstyle{definition}
\newtheorem{remark}[theorem]{Remark}
\newcommand{\Z}{\mathbb{Z}}
\newcommand{\R}{\mathbb{R}}
\newcommand{\RP}{\mathbb{R}P}
\newcommand\slide\Delta
\DeclareMathOperator{\rank}{rank}
\begin{document}
\title[Real Bott manifolds and Acyclic Digraphs] {Real Bott manifolds and Acyclic digraphs}

\author{Suyoung Choi}
\address{Department of Mathematics, Osaka City University, Sugimoto, Sumiyoshi-ku, Osaka 558-8585, Japan}
\email{choi@sci.osaka-cu.ac.jp}

\author{Sang-il Oum}
\address{Department of Mathematical Sciences, KAIST, 335 Gwahangno, Yuseong-gu, Daejeon 305-701, Republic of Korea}
\email{sangil@kaist.edu}

\thanks{The first author was supported by the Japanese Society for the
  Promotion of Sciences (JSPS grant no. P09023).
  The second author was
   supported by Basic Science Research Program
     through the National Research Foundation of Korea (NRF)
     funded by the Ministry of Education, Science and Technology
     (2009-0063183)
  and TJ Park Junior Faculty Fellowship.
}

\keywords{small cover over cube, real Bott manifold, acyclic digraph,
  invariant, local complementation, Bott equivalence}
\subjclass[2000]{Primary 37F20, 57R91, 05C90; Secondary 53C25}

\date{\today}
\maketitle

\begin{abstract}
  Masuda (2008) provided the characterization of real Bott manifolds in terms of three operations on upper triangular matrices.
  We provide a combinatorial characterization of real Bott manifolds
  up to diffeomorphism in terms of operations on  directed acyclic graphs.
  Our observation leads to several new invariants of real Bott manifolds.
\end{abstract}

\section{Introduction}
A \emph{small cover}, defined by Davis and Januszkiewicz~\cite{Da-Ja-1991},
is an $n$-dimensional closed smooth manifold $M$
with a smooth action of real torus $(S^0)^n(=:T)$ satisfying the following two conditions:
\begin{itemize}
  \item the action is locally isomorphic to a standard action of $T$ on $\R^n$, and
  \item the orbit space $M/T$ can be identified with a simple (combinatorial) polytope $P$.
\end{itemize}
A \emph{cube} is a polytope combinatorially equivalent to the cartesian product of finitely many intervals. We will restrict our attention to the case where $P$ is a cube. In this case, a small cover is said to be \emph{over} a cube.

Small covers over cubes are known as \emph{real Bott manifolds}, see \cite{Ch-Ma-Su-2010-2} or Section~\ref{sec:small cover}. They are obtained as iterated $\RP^1$ bundles starting with a point, where each stage is the projectivization of a Whitney sum of two real line bundles. The topological classification of real Bott manifolds is known by Kamishima and Masuda~\cite{Ka-Ma-2009}, and Masuda~\cite{Masuda-2008}; two real Bott manifolds are diffeomorphic if and only if their cohomology rings with $\Z_2$ coefficients are isomorphic as graded rings.
Choi~\cite{Choi-2008} showed that small covers over cubes are strongly related to acyclic digraphs.

We introduce two operations on acyclic digraphs and show that all diffeomorphism types of real Bott manifolds can be described by the composition of these two operations.
This combinatorial observation allows us to invent several invariants of real Bott manifolds.

Our combinatorial interpretation allows more efficient enumeration of all $n$-dimensional real Bott manifolds up to diffeomorphism.
We list the number $B_n$ of $n$-dimensional real Bott manifolds in
Table~\ref{tab:Bott equivalence} for $n \leq 8$. Previously, $B_n$
were computed only for $n\leq 5$ in \cite{Masuda-2008} and
\cite{Naz-08}, but it was a hard task even for $n=5$ when using topological or linear algebraic methods.
The classification for $8$-dimensional real Bott manifolds takes less than $10$ minutes on a regular desktop computer,
when using the list of non-isomorphic acyclic digraphs
provided by B. D. McKay.\footnote{\url{http://cs.anu.edu.au/~bdm/data/digraphs.html}}
In addition, we also list the numbers $E_n$ and $S_n$ of $n$-dimensional orientable real Bott manifolds and symplectic real Bott manifolds in Table~\ref{tab:Bott equivalence} for $n \leq 8$, respectively.

\begin{table}
  \centering
\begin{tabular}{c|c c c c c c c c }
  \hline
  $n$ & 1 & 2 & 3 & 4  & 5 & 6 & 7 & 8\\ \hline
  $B_n $ & 1 & 2 & 4 & 12 & 54 & 472 & 8,512 & 328,416\\ \hline
  $E_n $ & 1 & 1 & 2 & 3 & 8 & 29 & 222 & 3,607\\ \hline
  $S_n $ &   & 1 &   & 2 &   & 6  &     &  31 \\
  \hline
\end{tabular}
\caption{The numbers $B_n, E_n, S_n$ of $n$-dimensional real Bott manifolds, orientable real Bott manifolds and symplectic real Bott manifolds up to diffeomorphism, respectively.}
\label{tab:Bott equivalence}
\end{table}

The rest of this paper is organized as follows.  In Sections~\ref{section:acyclic digraphs} and \ref{sec:small cover}, we briefly review some notions in acyclic digraphs and small covers respectively. In Section~\ref{sec:classification}, we classify the acyclic digraph up to Bott equivalence, which implies the diffeomorphism class of real Bott manifolds. Finally, in Section~\ref{sec:invariant}, we introduce several invariants of acyclic digraphs up to Bott equivalence.

\section{Acyclic digraphs} \label{section:acyclic digraphs}
\subsection{Preliminaries on acyclic digraphs}
We briefly review the terminologies in graph theory.
A \emph{directed graph},  or simply a \emph{digraph} $D$
is a pair $(V,E)$ consisting of a finite set $V$ of \emph{vertices}
and a set $E$ of ordered pairs
$e=(u,v)$ of distinct vertices of $D$, called
\emph{arcs}.
We write $V(D)$ to denote the set of all vertices of $D$.
An ordering $v_1,v_2,\ldots,v_n$ of vertices of a digraph $D$ is an \emph{acyclic ordering} if $i<j$ for each arc $(v_i,v_j)$ in $D$.
A digraph is \emph{acyclic} if it admits an acyclic ordering.

A vertex $u$ is \emph{adjacent} to a vertex $v$ in $D$ if $(u,v)$ is an arc of $D$.
If $(u,v)$ is an arc of $D$, then $v$ is called an \emph{out-neighbor} of $u$ and $u$ is called an \emph{in-neighbor} of $v$.
We write $N_D^+(v)$, $N_D^-(v)$ to denote the set of all out-neighbors and in-neighbors of $v$, respectively, in $D$.
The \emph{in-degree} of a vertex $v$ in $D$, denoted by
$\deg_D^-(v)$, is the number of in-neighbors of $v$.
Similarly the \emph{out-degree} of a vertex $v$ in $D$, denoted by $\deg_D^+(v)$, is the number of out-neighbors of $v$.

For two digraphs $D=(V,E)$ and $D'=(V',E')$,
a bijection $f:V\to V'$ is called an \emph{isomorphism}
when
$(u,v)\in E$ if and only if $(f(u),f(v))\in E'$.
Two digraphs are \emph{isomorphic} if there is an isomorphism.

For a digraph $D=(V,E)$ with a fixed ordering $\{v_1,v_2,\ldots,v_n\}$ of $V$,
the \emph{adjacency matrix} of $D$ is an $n\times n$ matrix
$A_D=(a_{ij})_{i,j\in \{1,2,\ldots,n\}}$
such that
\[
a_{ij} =              \begin{cases}
  1 & \text{if $(v_i, v_j) \in E$,} \\
  0 & \text{otherwise.}
\end{cases}
\]

Let $\mathcal{G}_n$ be the set of all acyclic digraphs on the vertex set
$\{1,2,\ldots,n\}$ and let $\mathcal{A}_n$ be the set of all adjacency
matrices of acyclic digraphs in $\mathcal{G}_n$.

\subsection{Bott equivalence of acyclic digraphs}
An acyclic digraph $H$ is
\emph{Bott equivalent} to an acyclic digraph $D$
if $H$ has an isomorphic digraph that is
obtained from $D$
by successively applying local complementations
and slides.
In the following, we define local complementations and slides.

For a vertex $v$ of a digraph $D$, let $D*v$ be a digraph obtained by
adding an arc $(u,w)$ if $(u,w)\notin E$, or removing the arc
$(u,w)$ otherwise, for each pair $(u,w)\in N_D^-(v)\times N_D^+(v)$
with $u\neq w$.  This operation to obtain $D*v$ from $D$ is called a
\emph{local complementation} at $v$.  This operation is not new; As
far as the authors find, Bouchet~\cite{Bouchet1987c} wrote one of the
earliest papers on local complementations on digraphs.

For two sets $X$ and $Y$, we write $X\Delta Y=(X\setminus Y)\cup
(Y\setminus X)$.
For two distinct vertices $v, w$ having the same set of in-neighbors in
a digraph $D$,
we define $D\slide vw$ to be a digraph obtained by replacing
$N_D^+(w)$ by $N_D^+(w)\Delta N_D^+(v)$.
This operation to obtain $D\slide vw$ from $D$ is called a \emph{slide}. See Figure~\ref{fig:local complement and slides}.

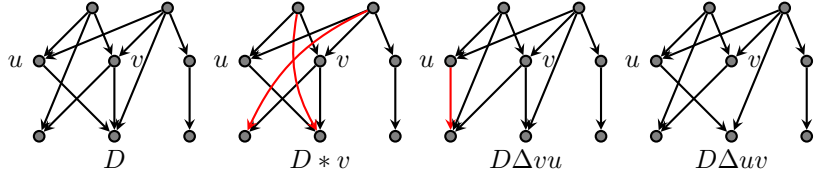
\begin{figure}
  \centering
  \tikzstyle{v}=[circle, draw, solid, fill=black!50, inner sep=0pt, minimum width=4pt]
  \tikzstyle{every edge}=[->,>=stealth,draw]
 \begin{tikzpicture}[thick,scale=0.5]
    \node [v] (bot1) {};
    \node [v] (bot2) [right of=bot1]{};
    \node [v] (bot3) [right of=bot2]{};
    \node[v] (mid1) [above of=bot1][label=left:$u$]   {}
    edge[->] (bot2);
    \node[v] (mid2) [right of=mid1][label=right:$v$] {}
    edge[->] (bot1)
    edge[->] (bot2);
   \node [v](mid3) [right of=mid2] {}
    edge[->] (bot3);
    \node [v] (top1) [above right of=mid1] {}
    edge[->] (bot1)
    edge[->] (mid1)
    edge[->] (mid2);
    \node [v] (top2) [right of=top1] {}
    edge[->] (mid1)
    edge[->] (mid2)
    edge[->] (mid3)
    edge[->] (bot2);
    \node [below of=bot2,node distance=3mm] {$D$};
\end{tikzpicture}
 \begin{tikzpicture}[thick,scale=0.5]
    \node [v] (bot1) {};
    \node [v] (bot2) [right of=bot1]{};
    \node [v] (bot3) [right of=bot2]{};
    \node[v] (mid1) [above of=bot1][label=left:$u$]   {}
    edge[->] (bot2);
    \node[v] (mid2) [right of=mid1][label=right:$v$] {}
    edge[->] (bot1)
    edge[->] (bot2);
   \node [v](mid3) [right of=mid2] {}
    edge[->] (bot3);
    \node [v] (top1) [above right of=mid1] {}
    edge[->,bend angle=20,bend right,color=red] (bot2)
    edge[->] (mid1)
    edge[->] (mid2);
    \node [v] (top2) [right of=top1] {}
    edge[->] (mid1)
    edge[->] (mid2)
    edge[->] (mid3)
    edge[->,bend angle=20,bend right,color=red] (bot1);
    \node [below of=bot2,node distance=3mm] {$D*v$};
\end{tikzpicture}
 \begin{tikzpicture}[thick,scale=0.5]
    \node [v] (bot1) {};
    \node [v] (bot2) [right of=bot1]{};
    \node [v] (bot3) [right of=bot2]{};
    \node[v] (mid1) [above of=bot1][label=left:$u$]   {}
    edge[->,color=red] (bot1);
    \node[v] (mid2) [right of=mid1][label=right:$v$] {}
    edge[->] (bot1)
    edge[->] (bot2);
   \node [v](mid3) [right of=mid2] {}
    edge[->] (bot3);
    \node [v] (top1) [above right of=mid1] {}
    edge[->] (bot1)
    edge[->] (mid1)
    edge[->] (mid2);
    \node [v] (top2) [right of=top1] {}
    edge[->] (mid1)
    edge[->] (mid2)
    edge[->] (mid3)
    edge[->] (bot2);
   \node [below of=bot2,node distance=3mm] {$D\Delta vu$};
\end{tikzpicture}
 \begin{tikzpicture}[thick,scale=0.5]
    \node [v] (bot1) {};
    \node [v] (bot2) [right of=bot1]{};
    \node [v] (bot3) [right of=bot2]{};
    \node[v] (mid1) [above of=bot1][label=left:$u$]   {}
    edge[->] (bot2);
    \node[v] (mid2) [right of=mid1][label=right:$v$] {}
    edge[->] (bot1);
  \node [v](mid3) [right of=mid2] {}
    edge[->] (bot3);
    \node [v] (top1) [above right of=mid1] {}
    edge[->] (bot1)
    edge[->] (mid1)
    edge[->] (mid2);
    \node [v] (top2) [right of=top1] {}
    edge[->] (mid1)
    edge[->] (mid2)
    edge[->] (mid3)
    edge[->] (bot2);
   \node [below of=bot2,node distance=3mm] {$D\Delta uv$};
 \end{tikzpicture}
 \caption{Two operations: a local complementation and a slide}
 \label{fig:local complement and slides}
\end{figure}

It is easy to observe that if $D$ is an acyclic digraph, then
so are $D*v$ and $D\slide xy$ (assuming $N_D^-(x)=N_D^-(y)$).
Furthermore $D*v*v=D$ and $D\slide xy\slide xy=D$.

\section{Small covers over cubes} \label{sec:small cover}
\subsection{Small covers over cubes}
Let $P$ be a simple polytope of dimension $n$ and let $\mathcal{F}(P)=\{F_1 , \ldots, F_m\}$ be the set of facets, codimension one faces, of $P$. It is known by \cite{Da-Ja-1991} that all small covers over $P$ can be classified by certain maps $\lambda:\mathcal{F}(P) \rightarrow \Z_2^n$ satisfying the so-called \emph{non-singularity condition}; $\{ \lambda(F_{i_1}) , \ldots , \lambda(F_{i_n}) \}$ is a basis of $\Z_2^n$ whenever the intersection $F_{i_1} \cap \cdots \cap F_{i_n}$ is non-empty. We call $\lambda$ a \emph{characteristic function}.

The construction of the small cover with respect to $\lambda$ is simple. Let $S^0(F_i)$ be the subgroup of $T (= (S^0)^n)$ generated by $\lambda(F_i)$. Given a point $p \in P$, we denote by $G(p)$ the minimal face containing $p$ in its relative interior. Assume $G(p) = F_{j_1} \cap \cdots \cap F_{j_k}$. Then $S^0(G(p)) = \oplus_{i=1}^{k} S^0(F_{j_i})$. Note that $S^0(G(p))$ is a $k$-dimensional subgroup of $T$. Let $M(\lambda)$ denote
\begin{equation}\label{eq:construction of M(lambda)}
M(\lambda)=P \times T /\sim,
\end{equation}
where $(p,g) \sim (q,h)$ if $p=q$ and $g^{-1}h \in S^0(G(p))$.
The canonical free action of $T$ on $P \times T$ descends to an action on $M(\lambda)$. It is easy to show that the action is locally standard and the orbit space can be identified with $P$. Thus $M(\lambda)$ is a small cover over $P$.

Two small covers $M$ and $N$ over $P$ are \emph{Davis-Januskiewicz equivalent} (or simply, \emph{D-J equivalent}) if there is a weak T-equivariant homeomorphism $f \colon M \to N$ covering the identity on $P$. By \cite{Da-Ja-1991}, $M(\lambda_1)$ is D-J equivalent to $M(\lambda_2)$ if and only if there is an automorphism $\sigma \in \text{Aut}(\Z_2^n)$ such that $\lambda_1 = \sigma \circ \lambda_2$.

When $P$ is an $n$-dimensional cube, $P$ has $2n$ facets, and for each facet $F$ of $P$ there exists a unique facet which does not intersect with $F$. So we give an ordering of the facets such that the facets $F_j$ and $F_{n+j}$ do not intersect for $1 \leq j \leq n$. Then we may assign an $n \times 2n$ matrix to a characteristic function $\lambda$ by choosing a basis for $(\Z_2)^n$. Then
\[
\left( \lambda(F_1) \cdots \lambda(F_{2n}) \right) = ( A | B ),
\]
where $A$ and $B$ are $n \times n$ matrices. Since
$\bigcap_{j=1}^{n}F_j \neq \emptyset$, we also may assume that $A$ is the identity matrix of size $n$. Then we can see that the
non-singularity condition of $\lambda$ is equivalent to the
following; \emph{every principal minor of $B$ is 1}.

Hence, there is a bijection between the DJ-equivalence classes of small covers over cubes and the set $M(n)$ of $n \times n$ matrices over $\Z_2$ all of whose principal minors are 1.

\begin{theorem}[Choi {\cite[Theorem 2.2]{Choi-2008}}]
  Let $\phi : \mathcal{A}_n \to M(n)$ by $A \mapsto I_n + A^t$, where $I_n$ is the identity matrix of size $n$ and $A^t$ is the transpose matrix of $A$. Then $\phi$ is a bijection.
\end{theorem}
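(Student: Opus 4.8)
The plan is to verify that $\phi$ is well-defined (lands in $M(n)$), that it has an inverse of the same form, and hence is a bijection. The map $\phi(A) = I_n + A^t$ is manifestly an involution-like construction: define $\psi : M(n) \to \mathcal{A}_n$ by $\psi(B) = (I_n + B)^t = I_n^t + B^t = I_n + B^t$. Since we are working over $\Z_2$, we have $\psi(\phi(A)) = I_n + (I_n + A^t)^t = I_n + I_n + A = A$, and similarly $\phi(\psi(B)) = B$. So the only real content is to check two things: (i) if $A \in \mathcal{A}_n$ then $I_n + A^t$ has every principal minor equal to $1$, i.e. lies in $M(n)$; and (ii) conversely, if $B \in M(n)$ then $I_n + B^t$ is the adjacency matrix of an acyclic digraph on $\{1,\dots,n\}$, i.e. lies in $\mathcal{A}_n$.

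For (ii), recall that a $0$--$1$ matrix $C$ is the adjacency matrix of an acyclic digraph on the ordered vertex set $\{1,\dots,n\}$ if and only if $C$ is strictly upper triangular after some simultaneous permutation of rows and columns — equivalently, $C$ has zero diagonal and no directed cycle. Given $B \in M(n)$, first note that since the $1\times 1$ principal minors of $B$ are all $1$, the diagonal of $B$ is all $1$s, so $I_n + B^t$ (over $\Z_2$) has zero diagonal; and it is clearly a $0$--$1$ matrix. It remains to show the digraph it defines is acyclic. Here I would use the principal-minor hypothesis: if the digraph had a directed cycle on vertex set $S$, I would examine the $|S| \times |S|$ principal submatrix of $B$ indexed by $S$ and show its determinant (over $\Z_2$) is forced to be $0$, contradicting $B \in M(n)$. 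Concretely, on such an $S$ the submatrix of $I_n + B^t$ restricted to $S$ contains a permutation matrix supported on the cycle (hence has an even or constrained number of terms in its determinant expansion), and translating back via $C \mapsto I_{|S|} + C^t$ shows the corresponding principal minor of $B$ cannot be $1$. This is the step I expect to be the main obstacle: carefully relating "every principal minor of $B$ equals $1$ over $\Z_2$" to "the associated digraph has no directed cycle," since one must track how the identity-shift interacts with the determinant expansion over $\Z_2$ on each principal submatrix simultaneously.

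For (i), the argument runs in the same spirit but in the reverse direction, and is somewhat easier: given $A \in \mathcal{A}_n$, the digraph is acyclic, so after reindexing $A$ is strictly upper triangular, hence $A^t$ is strictly lower triangular and $I_n + A^t$ is lower triangular with $1$s on the diagonal; any principal submatrix is again lower triangular with unit diagonal, so its determinant is $1$. Since the original vertex ordering need not be an acyclic ordering, I would phrase this using the permutation-invariance of the set of principal minors under simultaneous row/column permutation, so that the reduction to the triangular case is legitimate. Assembling (i), (ii), and the two-line computation that $\phi$ and $\psi$ are mutually inverse then completes the proof that $\phi$ is a bijection.
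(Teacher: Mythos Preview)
The paper does not give its own proof of this theorem; it is simply quoted from \cite{Choi-2008}. So there is no in-paper argument to compare against, and your outline is essentially the standard proof.

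Your treatment of (i) and of the mutual-inverse computation is correct. In (ii), however, your key step needs a small but genuine refinement. You propose: ``if the digraph had a directed cycle on vertex set $S$, examine the principal submatrix of $B$ indexed by $S$ and show its determinant is forced to be $0$.'' As stated this is not true for an arbitrary directed cycle. For instance, if $S=\{1,2,3\}$ carries the $3$-cycle $1\to 2\to 3\to 1$ together with the extra arc $2\to 1$, the corresponding principal submatrix of $B=I+A^t$ is
\[
\begin{pmatrix}1&1&1\\1&1&0\\0&1&1\end{pmatrix},
\]
whose determinant over $\Z_2$ is $1$, not $0$. The fix is to take a \emph{shortest} directed cycle: any chord of a directed $k$-cycle produces a strictly shorter directed cycle, so the induced subdigraph on the vertex set $S$ of a shortest cycle is exactly the cycle. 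Then $A[S]$ is a cyclic permutation matrix $P$, and over $\Z_2$ one has $\det(I_k+P)=0$ (e.g.\ because $\det(xI_k-P)=x^k-1$ gives $\det(I_k+P)=\det(I_k-P)=1^k-1=0$), contradicting $B\in M(n)$. With this adjustment your argument goes through.
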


Let $D \in \mathcal{G}_n$ and let $A_D$ be its adjacency matrix. By the above
theorem, $(I_n | \phi(A_D))$ is the characteristic function. For the
sake of simplicity, we denote the corresponding small cover by
$M(A_D)$.

Let $D, H \in \mathcal{G}_n$. We note that the ordering of vertices of an acyclic digraph is nothing but the ordering of facets of $P$, and the diffeomorphism type of the small cover is independent on the ordering of facets by the construction \eqref{eq:construction of M(lambda)}. Hence, if $D$ and $H$ are isomorphic, then $M(A_D)$ and $M(A_H)$ are diffeomorphic (but not D-J equivalent).

\subsection{Real Bott manifolds}
To define a real Bott manifold, we consider with a sequence of $\RP^1$ bundles
\begin{equation}
    M_n \stackrel{\pi_n}\longrightarrow M_{n-1} \stackrel{\pi_{n-1}}\longrightarrow \cdots \stackrel{\pi_2}\longrightarrow M_1 \stackrel{\pi_1}\longrightarrow M_0 =\{\text{a point} \}, \label{eqn:Bott tower}
\end{equation}
where each fiber bundle $\pi_j : M_j \to M_{j-1}$ for $j=1, \ldots, n$ is the projectivization of a Whitney sum of a real line bundle and the trivial real line bundle over $M_{j-1}$. We call a sequence \eqref{eqn:Bott tower} a \emph{real Bott tower} and $M_n$ a \emph{real Bott manifold}. It is well-known that real line bundles are classified by their first Stiefel-Whitney classes and $H^1(M_{j-1}; \Z_2)$ is isomorphic to $(\Z_2)^{j-1}$. Hence, each $M_j$ is determined by a vector $A_j$ in $(\Z_2)^{j-1}$. One can organize this vector into an $n \times n$ upper-triangular square matrix $A$ with zero diagonals by putting $A_j$ as the upper part of the $j$-th column vector.

\begin{proposition}[Panov and Masuda {\cite[Proposition 3.1]{Ma-Pa-2008}}]
Let $A$ be an upper-triangular square matrix with zero diagonals. A real Bott tower with respect to $A$ carries a natural real torus action turning it into a small cover $M(A)$\footnote{The original statement of \cite[Proposition 3.1]{Ma-Pa-2008} is written in terms of complex versions of a Bott tower and a small cover called a \emph{quasitoric manifold}. But the same argument can be easily applied to the real cases.}.
\end{proposition}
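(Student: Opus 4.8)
I would argue by induction on $n$, peeling off the top $\RP^1$-bundle $\pi_n\colon M_n\to M_{n-1}$. The cases $n=0$ (a point) and $n=1$ ($\RP^1=\mathbb{P}(\uR\oplus\uR)$, the small cover over the interval for the standard involution, whose two fixed points lie over the endpoints) are immediate. For $n\ge 2$, let $A'$ be the leading $(n-1)\times(n-1)$ principal submatrix of $A$; then $M_{n-1}\to\cdots\to M_0$ is the real Bott tower for $A'$, so by the inductive hypothesis $M_{n-1}=M(A')$ is a small cover over the cube $P'=[0,1]^{n-1}$ carrying a locally standard action $\mu'$ of $(S^0)^{n-1}$ with orbit map $q'$. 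The first, and I expect the crucial, step is to upgrade the defining real line bundle $L_n\to M_{n-1}$ --- the one with $w_1(L_n)=\sum_{i<n}a_{in}x_i$, where the $x_i$ are the standard degree-one generators of $H^1(M_{n-1};\Z_2)$ --- to a $(S^0)^{n-1}$-equivariant real line bundle. Since $M_{n-1}$ is a small cover, each $x_i$ is Poincar\'e dual to an invariant facial submanifold, so the double cover it classifies --- hence the associated line bundle --- is built directly from the characteristic function $\lambda'$ and carries an obvious lift of $\mu'$; tensoring those indexed by the support of the $n$-th column of $A$ produces the required equivariant $L_n$.

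Given an equivariant $L_n$, the projectivisation $M_n=\mathbb{P}(L_n\oplus\uR)$ inherits two commuting actions: a lift of $\mu'$ from the base, and a fibrewise $S^0$-action, acting on each fibre $\mathbb{P}(L_{n,b}\oplus\R)\cong\RP^1$ as the linear involution fixing the images of the two canonical sections $\sigma_0,\sigma_\infty\colon M_{n-1}\to M_n$. Together these give a smooth action $\mu$ of $(S^0)^{n-1}\times S^0=(S^0)^n$; note that the structure group of $\pi_n$, consisting of diagonal involutions coming from $L_n$, lies inside (indeed equals) this fibrewise $S^0$, so everything is globally well defined. I would then check that $\mu$ is locally standard with cubical orbit space: $\pi_n$ is locally a product with $\RP^1$, the fibrewise $S^0$ is the standard $\RP^1$-action (fixed points $\sigma_0,\sigma_\infty$, quotient the interval), and the transition maps preserve this picture, so $\mu$ is locally standard and the orbit map is $q=(q'\circ\pi_n,\,t)\colon M_n\to P'\times[0,1]=[0,1]^n$, where $t$ is the fibre coordinate normalised so that $\sigma_0,\sigma_\infty$ lie over the endpoints. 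Hence $M_n$ is a small cover over the $n$-cube.

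It remains to identify its characteristic function. A generic point of either new facet $\sigma_0(M_{n-1})$, $\sigma_\infty(M_{n-1})$ has stabiliser exactly the fibrewise $S^0$, so both receive the new basis vector $e_n$. Over an old facet $F'$ of $P'$ the twisting of $L_n$ appears: at a generic point of $(q')^{-1}(F')$ the generator of the $(S^0)^{n-1}$-stabiliser either fixes the whole fibre or acts on it by the same involution as the fibrewise $S^0$, the latter occurring exactly when the generator $x_i$ dual to $(q')^{-1}(F')$ occurs in $w_1(L_n)$ --- that is, according to the $n$-th column of $A$ --- so the facet $\pi_n^{-1}((q')^{-1}(F'))$ retains its old characteristic vector, with an extra $e_n$ added precisely in those cases. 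Collating, the characteristic matrix of $M_n$ is $(I_n\mid B)$ where $B$ extends $B'=\phi(A')$ by the row and column dictated by the $n$-th column of $A$, and one checks --- after the harmless relabelling of facets and coordinates allowed by the construction --- that $B=I_n+A^t=\phi(A)$. Thus $M_n=M(A)$, closing the induction.

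The essential obstacle is that first step: making the equivariant structure on $L_n$ genuinely canonical and confirming that it interacts with the fibrewise $S^0$ exactly so as to record $w_1(L_n)$ in the characteristic vectors; everything afterwards is bookkeeping with stabilisers, orbit spaces and the bijection $\phi$. An alternative that sidesteps this uses the flat model $M(A)=\R^n/\Gamma(A)$, where $\Gamma(A)$ is generated by affine maps that flip certain coordinates and translate one coordinate by $\frac12$: here the iterated $\RP^1$-bundle structure is manifest, the half-integer translation group $(\frac12\Z/\Z)^n$ is normalised by $\Gamma(A)$ and descends to the desired $(S^0)^n$-action, the quotient is visibly the cube $[0,\frac12]^n$, and the characteristic vectors can be read off from the sign patterns of the generators of $\Gamma(A)$; the cost is a short inductive check that this flat manifold coincides with the abstract real Bott tower of the proposition.
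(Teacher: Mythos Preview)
The paper does not prove this proposition: it is quoted from Masuda--Panov \cite[Proposition~3.1]{Ma-Pa-2008} and accompanied only by the footnote remarking that the original statement is for complex Bott towers and quasitoric manifolds, with the same argument applying to the real case. So there is no proof in the paper to compare your attempt against.

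That said, your sketch is a faithful rendering of the standard argument. The inductive step you describe --- lifting the $(S^0)^{n-1}$-action equivariantly to the line bundle $L_n$, adjoining the fibrewise $S^0$-involution on $\mathbb{P}(L_n\oplus\uR)$, and reading off the new characteristic vectors from $w_1(L_n)$ --- is exactly how the complex version in \cite{Ma-Pa-2008} proceeds (with $S^1$ in place of $S^0$ and $\mathbb{C}P^1$ in place of $\RP^1$), and the real specialisation introduces no new difficulties. Your identification of the ``essential obstacle'' is accurate but perhaps overstated: for a small cover the degree-one cohomology classes $x_i$ are, by construction, restrictions of characters of $(S^0)^{n-1}$ along the Borel fibration, so the line bundles they classify are canonically equivariant, and the interaction with the fibrewise $S^0$ is then forced. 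The alternative flat-model argument you outline at the end is also correct and is closer in spirit to the treatment in \cite{Ka-Ma-2009}.
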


If $D$ is an acyclic digraph, then its adjacency matrix $A_D$ with respect to its acyclic ordering is an upper-triangular square matrix with zero diagonals. Hence, all small covers over a cube are diffeomorphic to real Bott manifolds.

\section{Classification of real Bott manifolds} \label{sec:classification}

For a matrix $A$, we write  $A_j$ to denote the $j$-th column of $A$
and write $A^i_j$ to denote its $(i,j)$-th entry of $A$.
Masuda \cite{Masuda-2008} defined three operations $\Phi_S$,
$\Phi^k$ and $\Phi^I_C$ on $\mathcal{A}_n$ as follows:
(Notice that these matrices are over $\Z_2$ and therefore $1+1=0$.)
\begin{enumerate}
\item $\Phi_S(A) = SAS^{-1}$, where $S$ is a permutation matrix
\item For $k \in \{1, \ldots, n\}$,
  \[\Phi^k(A)_j = A_j + A^{k}_j A_k \quad\text{for $j = 1, \ldots, n$}.\]
\item Let $I=\{i_1,i_2,\ldots,i_\ell\}$ be a subset of $\{1, \ldots, n\}$ such that $A_i = A_j$ for $i,j \in I$ and $i_1<i_2<\cdots<i_\ell$.
  Let $C$ be an invertible $|I|\times |I|$ matrix over $\Z_2$. Then,
  \[\Phi^I_C(A)^i_j = \begin{cases}
    \sum_{k=1}^{|I|} C^{m}_{k} A^{i_k}_j & \text{if $i=i_m\in I$,} \\
    A^i_j & \hbox{if $i \not\in I$.}
  \end{cases}
  \]
Roughly speaking, $\Phi^I_C$ is the left matrix multiplication of $C$ to the submatrix of $A$ which consists of $i$-th rows for $i \in I$.
\end{enumerate}

\begin{remark}
  Indeed, Masuda \cite{Masuda-2008}
  described all three operations on square upper-triangular matrices over
  $\Z_2$.
  But since every
  small cover over a cube is diffeomorphic to a real Bott manifold,
  these three operations can be extended to $\mathcal A_n$.
  Moreover, this is more natural because the upper-triangular matrices
  are
  not closed under $\Phi_S$.
\end{remark}

We say that two matrices in $\mathcal{A}_n$ are \emph{Bott equivalent}
if one is transformed to the other through a sequence of the three
operations.
Masuda \cite{Masuda-2008} proved that Bott equivalence on $\mathcal
A_n$ characterizes real Bott manifolds up to diffeomorphisms.
\begin{theorem}[{Masuda \cite[Theorem 1.1]{Masuda-2008}}]\label{thm:masuda}
  Two matrices $A, B\in \mathcal A_n$ are Bott equivalent if and only if corresponding real Bott manifolds $M(A)$ and $M(B)$ are diffeomorphic.
\end{theorem}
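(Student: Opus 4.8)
The plan is to deduce the statement from the known topological classification of real Bott manifolds together with an explicit description of their mod $2$ cohomology. By Kamishima--Masuda \cite{Ka-Ma-2009}, two real Bott manifolds are diffeomorphic if and only if their cohomology rings with $\Z_2$ coefficients are isomorphic as graded rings; thus it suffices to prove that $A$ and $B$ are Bott equivalent if and only if $H^*(M(A);\Z_2)\cong H^*(M(B);\Z_2)$ as graded rings. Since an acyclic digraph admits an acyclic ordering and, by the remark following Choi's theorem, the diffeomorphism type of $M(A_D)$ does not depend on the ordering of the vertices, applying $\Phi_S$ lets us assume from the start that both $A$ and $B$ are upper triangular with zero diagonal, which is exactly the setting treated in \cite{Masuda-2008}.

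First I would record the cohomology ring: for such $A$ one has
\[
H^*(M(A);\Z_2)\cong \Z_2[x_1,\ldots,x_n]\Big/\Big(x_j^2+x_j\sum_{i} A^i_j x_i : 1\le j\le n\Big),
\]
with $\deg x_i=1$, where $x_1,\ldots,x_n$ form a basis of $H^1$. For the \emph{``if''} direction (Bott equivalence $\Rightarrow$ diffeomorphism) it then suffices to exhibit, for each of the operations $\Phi_S$, $\Phi^k$, $\Phi^I_C$, a graded ring isomorphism of $\Z_2[x_1,\ldots,x_n]$ carrying the defining ideal of $M(A)$ onto that of $M(\Phi(A))$: for $\Phi_S$ it is a permutation of the generators; for $\Phi^k$ it is a unipotent change of the degree-one generators (reflecting the interchange of the two line bundles at the $k$-th stage of the tower); and for $\Phi^I_C$ it is a change of basis of $H^1$ supported on the coordinates in $I$, which is well defined precisely because the columns $A_i$ ($i\in I$) coincide. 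Each of these is a direct computation with the quadratic relations above.

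The substantial part is the \emph{``only if''} direction. A graded ring isomorphism $f\colon H^*(M(A);\Z_2)\to H^*(M(B);\Z_2)$ restricts to a linear isomorphism of $H^1$, given by an invertible matrix $P$ over $\Z_2$, and the requirement that $f$ carry the squaring relations of $A$ to those of $B$ forces an algebraic compatibility between $P$, $A$, and $B$. The aim is to show that any such $P$ factors as a product of the elementary transformations realized by $\Phi^k$ and $\Phi^I_C$. I would argue by induction on $n$, peeling off a source vertex of the acyclic digraph: its associated squaring relation is essentially canonical in the ring, so $f$ must carry it, up to the allowed moves, to the analogous generator of $M(B)$, and quotienting by it lowers the dimension. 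The delicate point is the case in which several columns of $A$ agree; there the corresponding degree-one generators are interchangeable in the ring, and one must absorb the relevant block of $P$ into a single application of $\Phi^I_C$, checking that the invertibility of $C$ is exactly what the ring isomorphism supplies.

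The main obstacle is this last piece of bookkeeping: tracking how a general linear change of basis of $H^1$ acts on the whole system of quadratic relations and decomposing it into the three prescribed moves, while keeping the induction --- and the passage between $\mathcal A_n$ and upper-triangular matrices via $\Phi_S$ --- under control. This is precisely the content of Masuda's argument in \cite{Masuda-2008}; once it is established for upper-triangular matrices, the extension to all of $\mathcal A_n$ is immediate from the reduction in the first paragraph.
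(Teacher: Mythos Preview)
The paper does not prove this theorem at all: it is quoted verbatim as Masuda's result \cite[Theorem~1.1]{Masuda-2008} and used as a black box, together with Lemma~\ref{lem:bottequ}, to obtain Theorem~\ref{theorem:topological classification}. So there is no ``paper's own proof'' to compare your sketch against; the authors simply cite the external reference.

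That said, your sketch is a faithful outline of the argument one finds in \cite{Masuda-2008}: reduce, via $\Phi_S$, to strictly upper-triangular matrices; use the Davis--Januszkiewicz presentation $H^*(M(A);\Z_2)\cong \Z_2[x_1,\ldots,x_n]/(x_j^2+x_j\sum_i A^i_j x_i)$; verify by hand that each of $\Phi_S$, $\Phi^k$, $\Phi^I_C$ induces a graded ring isomorphism; and for the converse analyze an arbitrary graded isomorphism by its action on $H^1$ and factor it into the three moves. You correctly locate the only nontrivial step in the last item. One small caution: invoking \cite{Ka-Ma-2009} to replace ``diffeomorphic'' by ``cohomology rings isomorphic'' is logically fine here, but Masuda's original argument in \cite{Masuda-2008} in fact proves both directions directly at the level of matrices and cohomology without appealing to the cohomological rigidity theorem as a separate input, so your reduction is a mild repackaging rather than a genuinely different route.
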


We now prove our main observation, stating that two acyclic digraphs $D$ and $H$ are Bott equivalent
(in the sense of the definition in Section \ref{section:acyclic digraphs})
if and only if $A_D$ and $A_H$ are Bott equivalent.

\begin{lemma}\label{lem:bottequ}
  Two acyclic digraphs are Bott equivalent
  if and only if their adjacency matrices are Bott equivalent.
\end{lemma}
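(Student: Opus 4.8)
The strategy is to establish a dictionary between the two notions of Bott equivalence by matching the combinatorial operations on acyclic digraphs (local complementation $*v$ and slide $\Delta vw$) with Masuda's matrix operations ($\Phi_S$, $\Phi^k$, $\Phi^I_C$) under the correspondence $D\mapsto A_D$. Since Bott equivalence in each setting is defined as the equivalence relation generated by the respective operations, it suffices to show that each generator on one side can be realized as a composition of generators (plus, possibly, an isomorphism / conjugation by a permutation) on the other side, and vice versa. Throughout I would fix an acyclic ordering of the vertices, so that $A_D$ is upper-triangular with zero diagonal; the relevant subtlety is that local complementation and slides do not respect a fixed ordering in general, which is exactly why $\Phi_S$ must be kept in the toolbox.

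First I would translate the two digraph operations into matrix language. A local complementation at $v$ adds (mod $2$) the outer product $N_D^-(v)\cdot N_D^+(v)^t$ to the adjacency matrix, restricted to off-diagonal entries — that is, for an arc $(u,w)$ with $u\in N_D^-(v)$, $w\in N_D^+(v)$, $u\ne w$, the $(u,w)$ entry is flipped. Writing this in terms of columns of $A_D$: the $w$-th column, for each out-neighbor $w$ of $v$, gets $A_v$ (the $v$-th column, recording in-neighbors of $v$) added to it. This is visibly close to $\Phi^k$ with $k=v$: $\Phi^v(A)_j = A_j + A^v_j A_v$, which adds $A_v$ to exactly the columns $A_j$ with $A^v_j=1$, i.e. the columns indexed by out-neighbors of $v$. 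The only discrepancy is the diagonal/self-loop bookkeeping and the entry in position corresponding to the arc $(v,v)$ or to pairs where $u=w$; since digraphs here have no loops and principal minors are controlled, I would check that $\Phi^v(A_D)=A_{D*v}$ on the nose, or differs by a harmless conjugation. For the slide $D\Delta vw$ (defined when $N_D^-(v)=N_D^-(w)$, i.e. $A_v=A_w$ as columns), the operation replaces $N_D^+(w)$ by $N_D^+(w)\,\Delta\,N_D^+(v)$ — in matrix terms it replaces row $w$ by row $w$ plus row $v$. That is exactly $\Phi^I_C$ with $I=\{v,w\}$ and $C=\begin{pmatrix}1&0\\1&1\end{pmatrix}$ (or its transpose, depending on index conventions), which is legitimate precisely because the hypothesis $A_v=A_w$ is the condition "$A_i=A_j$ for $i,j\in I$" required to apply $\Phi^I_C$. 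Conjugation by permutation matrices $\Phi_S$ corresponds to relabelling vertices, i.e. digraph isomorphism, which is already absorbed into the definition of Bott equivalence of digraphs.

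Next I would prove the converse inclusions: every $\Phi_S$, $\Phi^k$, $\Phi^I_C$ is generated by local complementations, slides, and isomorphisms. $\Phi_S$ is an isomorphism, so it is free. For $\Phi^k$: the identification above shows $\Phi^v$ equals a single local complementation at $v$ up to possibly adjusting for the fact that $A_{D*v}$ might not be upper-triangular in the original ordering — but after applying a suitable $\Phi_S$ (available on the matrix side, isomorphism on the digraph side) one returns to an acyclic ordering. The genuinely nontrivial case is $\Phi^I_C$ for a general invertible $C$ over $\Z_2$ of size $|I|$. Here I would use the fact that $GL_{|I|}(\Z_2)$ is generated by elementary transvections (add row $k$ to row $m$) together with permutations: the transvections are exactly slides $\Delta i_k i_m$ (each legitimate because all columns in $I$ are equal, a condition preserved by the operation since it only modifies rows indexed by $I$ and those rows/columns stay equal among themselves after adding one to another — this needs a short verification), and the permutations within $I$ are isomorphisms. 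So $\Phi^I_C$ factors as a product of slides and isomorphisms.

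**Main obstacle.** The delicate point is the bookkeeping around which entries actually get flipped by a local complementation versus $\Phi^k$ — specifically entries on or near the diagonal, and entries corresponding to pairs $(u,w)$ with $u=w$ or $u,w\in\{v\}$ — together with verifying that after each digraph operation one can restore a common acyclic ordering so that the matrix identifications hold literally (rather than merely up to conjugation). A second, related subtlety is checking that the hypothesis legitimizing a slide (equality of in-neighbor sets, i.e. equal columns) is genuinely preserved under the sequence of slides used to build an arbitrary $C\in GL_{|I|}(\Z_2)$; this should follow because every such slide modifies only rows indexed by $I$, leaving the columns indexed by $I$ equal to each other throughout, but it deserves an explicit argument. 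I expect the rest — the column/row arithmetic matching $*v\leftrightarrow\Phi^k$ and $\Delta vw\leftrightarrow\Phi^I_C$ — to be a routine unwinding of definitions.
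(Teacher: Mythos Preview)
Your proposal is correct and follows essentially the same route as the paper: match $*v\leftrightarrow\Phi^k$ and $\Delta vw\leftrightarrow\Phi^I_C$ with an elementary $C$, then decompose a general $C\in GL_{|I|}(\Z_2)$ into row swaps (isomorphisms, since the vertices in $I$ share in-neighbors) and transvections (slides). Your flagged obstacles are non-issues---acyclicity forces $N_D^-(v)\cap N_D^+(v)=\emptyset$, so no diagonal entry is touched and $\Phi^k(A_D)=A_{D*v}$ exactly; and the paper works in $\mathcal A_n$ (all adjacency matrices of acyclic digraphs, not just upper-triangular ones), so no reordering is needed after each step.
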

\begin{proof}
  Let $D=(V,E)$ be an acyclic digraph
  with a certain ordering $v_1,v_2,\ldots,v_n$ of the vertices.
  Obviously $A_{D*v_k}=\Phi^k(A_D)$
  and
  if $i<j$, then
  $A_{D\slide v_iv_j}=\Phi_C^I(A_D)$
  with $C=
  \left(\begin{smallmatrix}
      1 & 0 \\ 1 & 1
    \end{smallmatrix}\right)
  $.
  If $i>j$, then
  $A_{D\slide v_iv_j}=\Phi_C^I(A_D)$
  with $C=
  \left(\begin{smallmatrix}
      1 & 1 \\ 0 & 1
    \end{smallmatrix}\right)
  $.
 Therefore Bott equivalent acyclic digraphs have Bott equivalent adjacency matrices.

  Now, let us prove the converse.
  We want to show that
  each of three operations $\Phi_S$, $\Phi^k, \Phi_C^I$
  on the adjacency matrix $A_D$ of $D$
  gives an adjacency matrix of another acyclic digraph Bott equivalent to $D$.
  Firstly, $\Phi_S$ will simply give an  isomorphic digraph, which is again Bott equivalent to $D$.
  Secondly, $\Phi^k(A_D)=A_{D*v_k}$ and therefore $\Phi^k$ corresponds to a local complementation at $v_k$.

  Last, we claim that $\Phi_C^I$ can be expressed as a sequence of slides.
  Suppose that this claim is false.
  Since $C$ is invertible,
  there is a sequence of elementary row operations $\sigma_1,\sigma_2,\ldots,\sigma_t$
  so that $C=\sigma_1\circ \sigma_2\circ \cdots\circ \sigma_t(I)$.
  Since $C$ is   a matrix over $\Z_2$, each $\sigma_i$
  is either an operation to interchange two rows of $C$
  or an operation to add a row to another row in $C$.
  Let $C'=\sigma_2\circ \cdots \circ\sigma_t(I)$. Then $C'$ is invertible
  and $C=\sigma_1(C')$. We assume that $C$ is chosen as a counterexample with minimum $t$.
  Thus, $\Phi_{C'}^I(A_D)=A_{D'}$ for some acyclic digraph $D'$
  Bott equivalent to $D$.
 If $\sigma_1$ is an operation to interchange two rows,
  then $  \Phi_{C'}^I(A_{D'})$ is an adjacency matrix after swapping two corresponding vertices in the fixed order, because these two vertices have the same set of in-neighbors
  and therefore clearly $\Phi_{C'}^I(A_{D'})$ is an adjacency matrix of a digraph isomorphic to $D'$.

  If $\sigma_1$ is an operation to add a row to another,  then we can simulate $\sigma_1$ by a slide on $D'$ and therefore
  $\Phi_{C}^I(A_D)= A_{D'\slide uv}$ for two vertices $u, v\in I$.
 This proves the claim because $D'\slide uv$ is Bott equivalent to $D$.
\end{proof}

Now we are ready to see the following theorem, implied by Theorem~\ref{thm:masuda} and Lemma~\ref{lem:bottequ}.
\begin{theorem} \label{theorem:topological classification} Two
  acyclic digraphs $D$ and $H$ are Bott equivalent if and only if the
  small covers over cubes $M(A_D)$ and $M(A_H)$ are diffeomorphic.
\end{theorem}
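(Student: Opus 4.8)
The plan is simply to compose the two equivalences already established. Before doing so, I would dispose of the degenerate case: if $D$ and $H$ have different numbers of vertices, then neither side of the claimed equivalence can hold, since local complementations, slides, and digraph isomorphisms all preserve the number of vertices (so $D$ and $H$ cannot be Bott equivalent), while $M(A_D)$ and $M(A_H)$ have different dimensions (so they cannot be diffeomorphic). Hence we may assume $D,H\in\mathcal{G}_n$ for a common $n$, and correspondingly $A_D,A_H\in\mathcal{A}_n$.

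Next I would chain the two results. By Lemma~\ref{lem:bottequ}, the acyclic digraphs $D$ and $H$ are Bott equivalent (in the sense of Section~\ref{section:acyclic digraphs}) if and only if their adjacency matrices $A_D$ and $A_H$ are Bott equivalent in $\mathcal{A}_n$ (in the sense of Masuda's three operations). By Theorem~\ref{thm:masuda}, $A_D$ and $A_H$ are Bott equivalent if and only if the corresponding real Bott manifolds $M(A_D)$ and $M(A_H)$ are diffeomorphic. Since every small cover over a cube is diffeomorphic to a real Bott manifold by the proposition of Panov and Masuda, and $M(A_D)$ is precisely the small cover over the cube whose characteristic function is $(I_n \mid \phi(A_D))$, concatenating these two ``if and only if'' statements yields exactly the assertion of the theorem.

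I do not expect any genuine obstacle at this stage: the real content has already been extracted, namely the dictionary between the combinatorial operations on digraphs and Masuda's matrix operations (Lemma~\ref{lem:bottequ}) and Masuda's diffeomorphism classification (Theorem~\ref{thm:masuda}). The only point requiring a line of care is that $\phi$ and the manifold $M(A_D)$ are defined only after fixing an acyclic ordering of $D$; but, as observed in Section~\ref{sec:small cover}, the diffeomorphism type of $M(A_D)$ does not depend on this ordering (an isomorphism of digraphs induces a diffeomorphism of the associated small covers), so the statement is well posed and the argument goes through verbatim.
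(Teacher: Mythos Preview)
Your proposal is correct and follows exactly the paper's approach: the theorem is stated as an immediate consequence of Lemma~\ref{lem:bottequ} and Theorem~\ref{thm:masuda}, with no further argument given. Your additional remarks on the degenerate case of differing vertex counts and on the independence of the chosen acyclic ordering are not in the paper's (one-line) justification, but they are harmless elaborations rather than a different route.
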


\section{Invariants of real Bott manifolds} \label{sec:invariant}
In this section, we introduce topological invariants of small covers
over cubes. To do so,
we focus on finding parameters on acyclic digraphs
that are invariant under taking local complementations and slides.

A vertex in an acyclic digraph is called a \emph{root} if it has no in-neighbor.
Let $L_k(D)$ be the set of vertices $v$ such that a longest directed path ending at $v$ has exactly $k$ edges.
Each $L_k(D)$ is called the $k$-th level of $D$.
Clearly $L_0(D)$ is the set of roots.
For an acyclic digraph $D$ with $n$ vertices,
a \emph{level   sequence} of $D$
is a sequence
$(|L_0(D)|,|L_1(D)|,|L_2(D)|,\ldots,|L_{n-1}(D)|)$.
It is obvious that $\cup_{i=0}^{n-1}L_i(D)=V(D)$.
Note that for each vertex $v\in L_i(D)$,
all in-neighbors of $v$ are in $\cup_{j=0}^{i-1} L_j(D)$
and all out-neighbors of $v$ are in $\cup_{j=i+1}^{n-1}L_j(D)$.
\begin{proposition}\label{prop:level}
  Bott equivalent acyclic digraphs have the identical level sequence.
\end{proposition}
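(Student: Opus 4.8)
The plan is to exhibit a vertex-labelling that is invariant under both generators of Bott equivalence and that refines the level sequence. Concretely, for an acyclic digraph $D$ define $\ell_D\colon V(D)\to\Z_{\ge 0}$ by letting $\ell_D(x)$ be the number of arcs on a longest directed path of $D$ ending at $x$, so that $L_k(D)=\{x\in V(D):\ell_D(x)=k\}$. Isomorphisms clearly preserve $\ell_D$, so it remains to show that $\ell_D$ is unchanged by a local complementation and by a slide. Since both operations are involutions ($D*v*v=D$ and $D\slide vw\slide vw=D$, the latter operation being again legal because a slide does not alter $N^-(v)$ or $N^-(w)$), it is enough to prove the one-sided inequality $\ell_D(x)\ge\ell_{D'}(x)$ for all $x$ whenever $D'$ is obtained from $D$ by one such operation; the reverse inequality then follows by applying the same statement to $D'$.

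The uniform tool will be path surgery: given $x$, pick a longest directed path $P=x_0\to x_1\to\cdots\to x_k$ of $D'$ ending at $x=x_k$, and produce from it a directed path of $D$ ending at $x$ of length at least $k$. The point is that a simple path meets any fixed vertex at most once, so only a very controlled part of $P$ can possibly use an arc that has changed.

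For $D'=D*v$, the arcs of $D'$ not in $D$ all lie in $N_D^-(v)\times N_D^+(v)$, and no arc incident with $v$ is affected. If $P$ uses such a new arc $(x_i,x_{i+1})$, then $x_i\in N_D^-(v)$ and $x_{i+1}\in N_D^+(v)$; I would first argue $v\notin V(P)$, since an occurrence $v=x_j$ with $j<i$ would, together with the arc $(x_i,v)$ of $D'$, give the directed cycle $x_j\to\cdots\to x_i\to x_j$ in the acyclic digraph $D'$ (the case $j>i+1$ is symmetric via $(v,x_{i+1})$, and $j\in\{i,i+1\}$ is ruled out as $x_i,x_{i+1}\ne v$). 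But then $x_0\to\cdots\to x_i\to v\to x_{i+1}\to\cdots\to x_k$ is a longer directed path of $D'$ ending at $x$, contradicting the choice of $P$. Hence a longest $D'$-path ending at $x$ is in fact a $D$-path, so $\ell_D(x)\ge\ell_{D'}(x)$. For $D'=D\slide vw$ with $N_D^-(v)=N_D^-(w)=:N$, only the out-arcs of $w$ change, and $v,w$ are non-adjacent in $D$ (otherwise a loop at $v$ or $w$ would appear). If $w\notin V(P)$ or $w=x_k$, then $P$ uses no out-arc of $w$, hence no changed arc, so $P$ lies in $D$. Otherwise $P$ leaves $w=x_j$ (with $j<k$) by an arc $(w,x_{j+1})$ with $x_{j+1}\in N_D^+(w)\slide N_D^+(v)$; if $x_{j+1}\in N_D^+(w)$ this arc survives in $D$ and $P$ lies in $D$, while if $x_{j+1}\in N_D^+(v)\setminus N_D^+(w)$ I would replace the vertex $w=x_j$ by $v$, using that $(x_{j-1},v)$ is an arc of $D$ (as $x_{j-1}\in N=N_D^-(v)$), that $(v,x_{j+1})$ is an arc of $D$ by assumption, and that every other arc of $P$ avoids $w$ and is therefore unchanged; after checking $v\notin V(P)$ exactly as before (any occurrence of $v$ before or after $w$ on $P$ closes a directed cycle of $D'$ through $(x_{j-1},v)$ or $(v,x_{j+1})$), this yields a $D$-path ending at $x$ of length $k$, so again $\ell_D(x)\ge\ell_{D'}(x)$.

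The step I expect to be the main obstacle is the slide case: one has to set up the case analysis cleanly (does $P$ meet $w$; is $w$ the last vertex of $P$; does the out-arc of $w$ used by $P$ survive in $D$), and, in the rerouting subcase, verify simultaneously that $v$ is not already on $P$, so that the surgery returns a genuine simple path, and that none of the arcs of $P$ left in place was a changed arc --- which is exactly where the observation that $P$ visits $w$ at most once is used.
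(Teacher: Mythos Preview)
Your proof is correct and follows essentially the same approach as the paper: both arguments exploit the involutivity of the two operations to reduce to a one-sided inequality, and both establish that inequality by path surgery---rerouting through the pivot vertex $v$ in the local complementation case, and swapping $w$ for its sibling in the slide case. The only cosmetic difference is that you start from a longest path in $D'$ and push it back to $D$, whereas the paper starts from a longest path in $D$ and pushes it forward to $D'$; your write-up is also more explicit about verifying simplicity of the rerouted path, a detail the paper leaves to the reader.
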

\begin{proof}
  Let $D$ be an acyclic digraph.
  It is enough to show that both local complementations and slides do not change $L_i(D)$.
  Let $v$ be a vertex in $L_i(D)$. Then there is a longest directed path $P_v$ in $D$  ending at $v$ with exactly $i$ edges.

  Let us first show that $v\in L_i(D*w)$  for a vertex $w$ in $D$.
 It is enough to show that $P_v$ is a path in $D*w$ as well, because, if so, then  a longest path in $D*w$ ending at $v$ will be a path in $D*w*w=D$ as well.
  Suppose that $P_v$ is not a path in $D*w$.
  Then $w$ must remove at least one arc $(x,y)$ of $P_v$ and therefore $(x,w)$ and $(w,y)$ are arcs of $D$.
  Since $D$ is acyclic, $w$ is not on $P_v$. Then by replacing the arc $(x,y)$ by a path $xwy$ in $P_v$, we can find a path longer than $P_v$ in $D$, contradictory to the assumption that $P_v$ is a longest path ending at $v$.
  This proves the claim that $L_i(D)=L_i(D*w)$ for all $i$.

  Now let us prove that $v\in L_i(D\slide uw)$ for two vertices $u,w$ having the set of in-neighbors. Again, it is enough to show that $D\slide uw$ has a path of length $i$ ending at $v$;
  because if $D\slide uw$ has  a longer path ending at $v$,
  then so does $D$ by the fact that $(D\slide uw)\slide uw=D$.
  We may assume that  $D\slide uw$ removes at least one arc $(x,y)$ of $P_v$.
  Then $w=x$ and both $(w,y)$ and $(u,y)$ are arcs of $D$.
  Since $u$ and $w$ have the same set of in-neighbors,
  we can replace $w$ by $u$ in $P_v$
  to obtain a path of the same length in $D\slide uw $.
  This completes the proof.
\end{proof}

The \emph{rank} of a digraph $D$ is the rank of the adjacency matrix
$A_D$ over $\Z_2$.
\begin{proposition}\label{prop:rank}
  Bott equivalent acyclic digraphs have
  the same rank.
\end{proposition}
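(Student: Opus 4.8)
The plan is to check that each of the three operations generating Bott equivalence leaves the rank of the adjacency matrix over $\Z_2$ unchanged; by Lemma~\ref{lem:bottequ} (more precisely, by the explicit descriptions of these operations given in its proof), any two Bott equivalent acyclic digraphs are joined by a chain of isomorphisms, local complementations, and slides, so this will suffice. Isomorphisms are trivial: reordering the vertices conjugates $A_D$ by a permutation matrix, which preserves rank.

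Next I would treat a slide $D\slide xy$, where $x$ and $y$ have the same set of in-neighbors. Passing to $D\slide xy$ replaces $N_D^+(y)$ by $N_D^+(y)\slide N_D^+(x)$ and changes nothing else; since the symmetric difference of sets corresponds to the sum of indicator vectors over $\Z_2$, the matrix $A_{D\slide xy}$ is obtained from $A_D$ by adding the $x$-th row to the $y$-th row. This is left multiplication by $I_n + e_y e_x^t$, whose determinant is $1 + e_x^t e_y = 1$ because $x\ne y$; hence the operation is invertible and rank is preserved.

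Then I would handle a local complementation $D*v$. The point to be careful about is that $D$ is acyclic, so $(a,v)$ and $(v,a)$ cannot both be arcs; consequently the product over $\Z_2$ of the $v$-th column $c$ of $A_D$ with its $v$-th row $r$ has zero diagonal, and its $(a,b)$-entry equals $1$ exactly when $a\in N_D^-(v)$, $b\in N_D^+(v)$, and $a\ne b$ — precisely the arcs toggled by the local complementation. Therefore $A_{D*v}=A_D + cr = A_D(I_n + e_v r)$. A routine computation (the matrix determinant lemma) gives $\det(I_n + e_v r) = 1 + r e_v = 1$, since $A_D$ has zero diagonal, so $I_n + e_v r$ is invertible and the rank is again preserved, which completes the proof.

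I do not expect a genuine obstacle here; the only care needed is in writing down the correct matrix form of each operation and in invoking acyclicity so that the outer-product expression for $A_{D*v}$ introduces no loop. A shorter variant avoids the digraph picture: from the proof of Lemma~\ref{lem:bottequ} one has $A_{D*v_k}=\Phi^k(A_D)$ and $A_{D\slide v_iv_j}=\Phi^I_C(A_D)$ with $C$ invertible, and one checks directly that $\Phi^k$ is right multiplication by an invertible matrix while $\Phi^I_C$ is left multiplication by an invertible matrix — yielding the same conclusion.
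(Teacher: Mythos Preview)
Your proof is correct and follows essentially the same approach as the paper: both verify that a slide amounts to adding one row of $A_D$ to another, and that a local complementation amounts to adding the row of $v$ to each row indexed by an in-neighbor of $v$ (using acyclicity to ensure no diagonal entry is disturbed), so rank is preserved. The only cosmetic difference is that you package these row operations as a single multiplication by an explicit invertible matrix and invoke the matrix determinant lemma, whereas the paper simply appeals to the fact that elementary row operations preserve rank.
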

\begin{proof}
  Let $D$ be an acyclic digraph.
  For a vertex $v$,
  we can obtain $A_{D*v}$ from $A_D$
  by adding the row of $v$ to rows of in-neighbors of $v$.
  Note that since $D$ is acyclic, no in-neighbor of $v$ is an out-neighbor of $v$.
  Therefore $\rank(A_{D*v})=\rank(A_D)$.

  Similarly
  let $u,w$ be two distinct vertices of $D$ having the same set of in-neighbors.
  Clearly $D$ and $D\slide uw$ have the same rank
  because
 $A_{D\slide uw}$ is obtained by adding the row of $u$ to the row of $w$ in $A_D$
  and therefore $\rank(A_{D\slide uw})=\rank (A_D)$.
\end{proof}

For subsets $X$ and $Y$ of the vertex set of a digraph $D$,
we write $A_D[X,Y]$ to denote the submatrix of $A_D$ whose rows
correspond to $X$ and columns correspond to $Y$.
We write $\rho_D(X,Y)=\rank(A_D[X,Y])$
where $A_D[X,Y]$ is considered as a matrix over $\Z_2$.
We write $\rho_D(X)=\rho_D(X,V(D)\setminus X)$.

\begin{proposition}\label{prop:cutrank}
  Let $D$, $H$ be Bott equivalent acyclic digraphs on $n$ vertices.
  Then,
 \begin{enumerate}[(i)]
  \item   $\rho_D(\cup_{i\in I} L_i(D))=\rho_H(\cup_{i\in I} L_i(H))$
    for every subset $I$ of $\{0,1,2,\ldots,n-1\}$,
  \item  $\rho_D(L_i(D),L_{i+1}(D))=\rho_H(L_i(H),L_{i+1}(H))$ for all
    $i\in \{0,1,2,\ldots,n-2\}$.
  \end{enumerate}
\end{proposition}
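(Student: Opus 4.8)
The plan is to reduce to the case of a single local complementation, a single slide, and a digraph isomorphism; to use the fact, already established inside the proof of Proposition~\ref{prop:level}, that a single local complementation or slide fixes every level set $L_i$ as a set (and that an isomorphism carries $L_i$ to $L_i$); and then to track how each operation changes the relevant submatrix of $A_D$. Two observations drive the argument. First, over $\Z_2$ a local complementation and a slide are the rank-one matrix updates $A_{D*v} = A_D + \mathbf{1}_{N}\mathbf{1}_{N'}^{\top}$ and $A_{D\slide uw} = A_D + e_w\,\mathbf{1}_{N^+_D(u)}^{\top}$, where $N = N^-_D(v)$, $N' = N^+_D(v)$, and acyclicity gives $N\cap N'=\emptyset$ (so these updates indeed have zero diagonal). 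Second, if $M'=M+ab^{\top}$ over $\Z_2$ and $a$ is a column of $M$ or $b^{\top}$ is a row of $M$, then the column space (resp.\ row space) of $M'$ lies in that of $M$, so $\rank M'\le\rank M$; if in addition the reverse update $M\mapsto M'$ has the same shape, or is an invertible row operation, then $\rank M'=\rank M$.

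For (i), fix $I$, put $W=\bigcup_{i\in I}L_i$ and $\bar W=V\setminus W$; by the preservation of levels this set $W$ is the same for $D$, $D*v$, and $D\slide uw$. Restricting the local-complementation update to rows $W$ and columns $\bar W$ yields $A_{D*v}[W,\bar W]=A_D[W,\bar W]+\mathbf{1}_{N\cap W}\,\mathbf{1}_{N'\cap \bar W}^{\top}$. If $v\in W$ then $\mathbf{1}_{N'\cap\bar W}^{\top}$ is exactly row $v$ of $A_D[W,\bar W]$; if $v\in\bar W$ then $\mathbf{1}_{N\cap W}$ is exactly column $v$ of $A_D[W,\bar W]$. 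In either case the second observation applies, and since $D*v*v=D$ with the same $W$ and the same arcs at $v$, the reverse update has the same shape, giving $\rho_{D*v}(W)=\rho_D(W)$. For a slide the vertices $u,w$ lie in a common level $L_k$, and restricting gives $A_{D\slide uw}[W,\bar W]=A_D[W,\bar W]+(e_w\!\restriction\!W)\,\mathbf{1}_{N^+_D(u)\cap\bar W}^{\top}$, where $N^+_D(u)\cap\bar W$ indexes row $u$ of $A_D[W,\bar W]$. If $k\in I$, then $u,w\in W$ and this is the invertible row operation adding row $u$ to row $w$ inside $A_D[W,\bar W]$; if $k\notin I$, then $e_w\!\restriction\!W=0$ and the block is unchanged. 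Finally a digraph isomorphism carries $L_i$ to $L_i$, hence carries $W$ to the corresponding set and induces a row-and-column permutation of the block, so it preserves $\rho$. Composing these invariances along a defining sequence of the Bott equivalence proves (i).

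For (ii) the computation is shorter. Restricting the local-complementation update to rows $L_i$ and columns $L_{i+1}$ gives the perturbation $\mathbf{1}_{N\cap L_i}\,\mathbf{1}_{N'\cap L_{i+1}}^{\top}$, which is nonzero only if $v$ has an in-neighbor in $L_i$ (forcing $v$ in a level $>i$) and an out-neighbor in $L_{i+1}$ (forcing $v$ in a level $\le i$); this is impossible, so $A_D[L_i,L_{i+1}]$ is unchanged by every local complementation. For a slide at $u,w\in L_k$ the restricted perturbation is $(e_w\!\restriction\!L_i)\,\mathbf{1}_{N^+_D(u)\cap L_{i+1}}^{\top}$, which vanishes unless $w\in L_i$, i.e.\ $k=i$, and in that case it is the invertible row operation adding row $u$ to row $w$ inside $A_D[L_i,L_{i+1}]$. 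Together with invariance under isomorphism this gives (ii).

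I do not anticipate a genuine obstacle: both operations are explicit rank-one matrix updates, and the level structure makes the relevant blocks of $A_D$ block-triangular, so the only real work is the case split on the position of $v$ (and of $u,w$) relative to $W$ and to $L_i,L_{i+1}$. The one point that must be stated with care is why the rank is exactly preserved rather than merely non-increasing, which is precisely where the involutivity $D*v*v=D$, $D\slide uw\slide uw=D$ (and the fact that the reverse updates keep the special shape) is used.
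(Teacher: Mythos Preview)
Your proof is correct. For part (i) it is essentially the paper's argument in different clothing: where the paper observes directly that the passage from $A_D[X,Y]$ to $A_H[X,Y]$ is a sequence of elementary row operations (if $v\in X$, add row $v$ to the rows indexed by $N^-_D(v)\cap X$) or column operations (if $v\in Y$, add column $v$ to the columns indexed by $N^+_D(v)\cap Y$), you phrase the same thing as a rank-one update whose rank-one factor is a row or column of the block, and then invoke involutivity to upgrade $\rank M'\le\rank M$ to equality. The involutivity step is unnecessary---once $b^{\top}$ is a row of $M$ not touched by $a$ (here $v\notin N^-_D(v)$), the update is already an invertible row operation---but the argument is fine as written.

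For part (ii) the routes genuinely diverge. The paper derives (ii) from (i) in one line by noting that, since there are no arcs from $L_a$ to $L_b$ when $a>b$, one has
\[
\rho_D(L_i,L_{i+1})=\rho_D\bigl(L_i\cup L_{i+2}\cup\cdots\cup L_{n-1}\bigr),
\]
so (i) applied to $I=\{i,i+2,\ldots,n-1\}$ gives (ii) immediately. Your direct computation instead shows that $A_D[L_i,L_{i+1}]$ is literally unchanged by every local complementation (via the level incompatibility $v\in L_{>i}$ versus $v\in L_{\le i}$) and is changed only by an elementary row operation under a slide with $u,w\in L_i$. This is a touch more work but yields a slightly sharper conclusion (invariance of the block itself, not just its rank, under local complementation).
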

\begin{proof}
  It is enough to prove when $H=D*v$ or $H=D\slide uw$.
  By Proposition \ref{prop:level},
  $L_i(D)=L_i(H)$ for each $i$.

  (i)
  For a subset $I=\{0,1,2,\ldots,n-1\}$,
  let $X=\cup_{i\in I} L_i(D)$
  and $Y=V(D)\setminus X$.
  Let $M=A_D[X,Y]$
  and $M'=A_{H}[X,Y]$.
  Then $\rho_D(X)=\rank M$
  and $\rho_H(X)=\rank M'$.

  Let us first consider the case when $H=D*v$ for a vertex $v$ of $D$.
  Then either $v\in X$ or $v\in Y$.
  If $v\in X$, then $M$ has a row indexed by $v$
  and $M'$ is obtained from $M$
  by adding the row of $v$ to all rows indexed by in-neighbors of $v$ in $X$.
 If $v\in Y$, then $M$ has a column indexed by $v$
  and $M'$ is obtained from $M$
  by adding the column of $v$ to all columns indexed by out-neighbors of $v$ in $Y$.
  Thus, in both cases, $\rank M=\rank M'$.

  Now let us consider the case when $H=D\slide uw$ for two vertices
  $u$, $w$ having the same set of in-neighbors.
  Since $u$ and $w$ have the same set of in-neighbors, they belong to the same level.
  Therefore either $\{u,w\}\subseteq X$ or $\{u,w\}\subseteq Y$.
  If $\{u,w\}\subseteq Y$, then $M'=M$.
  If $\{u,w\}\subseteq X$, then $M'$ is obtained from $M$ by adding the row of $u$
  to the row of $w$.
  So $\rank M=\rank M'$.
 This completes the proof of (i).

 (ii)
 Since $D$ is acyclic,
 there is no arc from $L_{a}(D)$ to $L_b(D)$ if $a>b$.
 Therefore
 \[
 \rho_{D}(L_i(D),L_{i+1}(D))=\rho_D(L_i(D)\cup L_{i+2}(D)\cup L_{i+3}(D)\cup \cdots \cup L_{n-1}(D)),\]
 and by (i),
 we have $\rho_D(L_i(D),L_{i+1}(D))=\rho_H(L_i(H),L_{i+1}(H))$.
\end{proof}

For two vertices $x$ and $y$ of $D$,
we say that $x\sim_D y$ if $x$ and $y$ have the same set of
  in-neighbors.
  Then $\sim_D$ is an equivalence relation of $V(D)$
  and each equivalence class is called a \emph{sibling group} of $D$.
A sibling group  of $D$
corresponds to a maximal set of identical columns in $A_D$.
Clearly each level $L_i(D)$ is partitioned into sibling groups.
\begin{proposition}
  If $D$ and $H$ are Bott equivalent acyclic digraphs,
  then
  $L_i(D)$
  and $L_i(H)$ have the same number of sibling groups having exactly $k$ vertices
  for each pair of $i$ and $k$.
\end{proposition}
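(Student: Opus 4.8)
The plan is to prove the stronger statement that each of the two generating operations --- a local complementation $D\mapsto D*v$ and a slide $D\mapsto D\slide uw$ --- leaves the partition of every level $L_i$ into sibling groups \emph{literally} unchanged, not merely the multiset of its block sizes. Since a digraph isomorphism maps $L_i$ to $L_i$ and carries sibling groups to sibling groups of the same size, this will give the proposition: a Bott equivalence is a finite sequence of local complementations and slides followed by an isomorphism. By Proposition~\ref{prop:level} we already know $L_i(D)=L_i(H)$ for Bott equivalent $D,H$, so the task reduces to showing that, for $H=D*v$ and for $H=D\slide uw$, the relations $\sim_D$ and $\sim_H$ induce the same partition of the set $L_i=L_i(D)=L_i(H)$.

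I would start by recording how each operation acts on the columns of $A_D$ --- equivalently, on the in-neighborhoods --- keeping in mind that column $x$ of $A_D$ is the indicator vector of $N_D^-(x)$. A local complementation at $v$ replaces $N_D^-(x)$ by $N_D^-(x)\Delta N_D^-(v)$ for each $x\in N_D^+(v)$ and fixes $N_D^-(x)$ for $x\notin N_D^+(v)$; a slide $D\slide uw$ (which requires $N_D^-(u)=N_D^-(w)$) replaces $N_D^-(x)$ by $N_D^-(x)\Delta\{w\}$ for each $x\in N_D^+(u)$ and fixes it otherwise. In either case only the columns indexed by a single out-neighborhood are touched, and all of them are changed by toggling the \emph{same} set of rows. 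Two observations then dispose of most of the argument: (i) the set $N_D^+(v)\cap L_i$ is a (possibly empty) union of sibling groups of $L_i$, since any two siblings have the same in-neighborhood and hence either both or neither receive an arc from $v$ (and likewise for $N_D^+(u)\cap L_i$); and (ii) applying one fixed row-toggle to a collection of columns preserves exactly which pairs among them are equal. Hence $\sim_H$ agrees with $\sim_D$ on the block $N_D^+(v)\cap L_i$ and on its complement inside $L_i$.

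The remaining --- and only genuinely delicate --- point is that a ``changed'' sibling group and an ``unchanged'' one cannot fuse into one, since it is exactly the fusion of two blocks that would alter a count. Here I would exhibit a row that permanently tells them apart. For a local complementation at $v$: if $x\in N_D^+(v)$ then column $x$ has a $1$ in row $v$, and because the digraph has no loops we have $v\notin N_D^-(v)$, so toggling the rows $N_D^-(v)$ does not disturb row $v$ and that $1$ persists; whereas if $y\notin N_D^+(v)$ then column $y$ has a $0$ in row $v$, unaffected by the operation. Thus a changed and an unchanged column disagree in row $v$ and can never be equal in $A_H$. The slide case is identical with $u$ in place of $v$ and the toggle $\Delta\{w\}$ in place of $\Delta N_D^-(v)$, now using $u\neq w$ to see that the $1$ in row $u$ of a changed column survives. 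This establishes that $*v$ and $D\slide uw$ preserve the partition of each level into sibling groups; composing finitely many of them and applying an isomorphism at the end yields the proposition.
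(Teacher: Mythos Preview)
Your proof is correct and follows essentially the same strategy as the paper: reduce to a single local complementation or slide and show that the equivalence relation $\sim_D$ is literally unchanged. The paper's version is terser --- it only argues $x\sim_D y\Rightarrow x\sim_H y$ and tacitly relies on the involution identities $D*v*v=D$ and $(D\slide uw)\slide uw=D$ for the converse --- whereas you establish both directions in one pass by tracking the row-toggle explicitly and checking that changed and unchanged columns cannot become equal.
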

\begin{proof}
We may assume that either $H=D*v$ for a vertex $v$
  or $H=D\slide uw$ for two vertices $u$, $w$ having the same set of
  in-neighbors.

 We claim that $\sim_D$ and $\sim_H$ are identical equivalence relations.
 To do so, we will prove that if $x$ and $y$ have the same set of in-neighbors in $D$,
  then they have the same set of in-neighbors in $H$.

 Firstly let us consider the case when $H=D*v$ for a vertex $v$.
  We may assume that $(v,x)$ or $(v,y)$ is an arc of $D$, because otherwise
  the set of in-neighbors of $x$ or $y$ will not be changed by
  applying a local complementation at $v$.
  Since $x$ and $y$ have the same set of  in-neighbors, both $(v,x)$
  and $(v,y)$ are arcs of $D$. Then $x$ and $y$ still have
  the same set of in-neighbors in $D*v$.

  Secondly let us assume that $H=D\slide uw$.
  Similarly we may assume that both $(u,x)$ and $(u,y)$ are arcs of
  $D$.
  Since $x$ and $y$ have the same set of in-neighbors,
  $(w,x)$ is an arc of $D$ if and only if $(w,y)$ is an arc of $D$.
  After a slide, $(w,x)$ is an arc of $H$ if and only if $(w,x)$ is
  not an arc of $D$
  and similarly $(w,y)$ is an arc of $H$ if and only if $(w,y)$ is not
  an arc of $D$. This proves the claim.
\end{proof}

The \emph{odd height} of an acyclic digraph $D$
is the length of a longest directed path ending at a vertex of odd out-degree.
In other words, it
is the maximum $k$ such that $L_k$ contains a vertex of odd
out-degree.
If $D$ has no vertex of odd out-degree, then we assume that its odd
height is $\infty$.

\begin{proposition}
  Bott equivalent acyclic digraphs have the same odd height.
\end{proposition}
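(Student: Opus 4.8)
The plan is to reduce, exactly as in Propositions~\ref{prop:rank} and \ref{prop:cutrank}, to the two cases $H=D*v$ and $H=D\slide uw$, and then to track the parities of out-degrees under each operation. The elementary fact I would isolate first is that the out-degree of a vertex $v$ of $D$, reduced modulo $2$, equals the $\Z_2$-sum of the row of $A_D$ indexed by $v$; equivalently, the column vector of out-degree parities is $A_D\mathbf 1\pmod 2$, where $\mathbf 1$ is the all-ones column vector. Since adding the row indexed by $v$ to the row indexed by $u$ (with $u\neq v$) is left multiplication of $A_D$ by $E=I+e_ue_v^{\top}$, and $E$ alters only the $u$-th coordinate of any column vector, adding to it the $v$-th coordinate, this operation flips the out-degree parity of $u$ by the out-degree parity of $v$ and leaves the out-degree parity of every other vertex unchanged.

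Next I would recall the explicit descriptions of the two operations on adjacency matrices already used in this section. For a local complementation at $v$, $A_{D*v}$ is obtained from $A_D$ by adding the row of $v$ to the rows of all in-neighbors of $v$; for a slide $D\slide uw$, $A_{D\slide uw}$ is obtained by adding the row of $u$ to the row of $w$. Write $m$ for the level of $v$ in the first case, and for the common level of $u$ and $w$ in the second case (they lie in the same level since $u\sim_D w$); by Proposition~\ref{prop:level} levels are unchanged, so ``level $m$'' is unambiguous in $D$ and in $H$. In the local-complementation case all in-neighbors of $v$ lie in levels $<m$, so only rows indexed by vertices of levels $<m$ are modified; in the slide case only the row indexed by $w\in L_m$ is modified. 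Combining this with the previous paragraph yields: (a) every vertex in a level $>m$ has the same out-degree parity in $D$ and in $H$; (b) $v$ in the first case (resp.\ $u$ in the second case) has the same out-degree parity in $D$ and in $H$; and (c) if that distinguished vertex has even out-degree, then \emph{every} vertex has the same out-degree parity in $D$ and in $H$.

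I would then finish with a two-line case analysis on the parity of the out-degree of the distinguished vertex. If it is even, (c) shows that $D$ and $H$ have precisely the same set of vertices of odd out-degree, hence the same odd height (and in particular $D$ has no such vertex if and only if $H$ has none, so the value $\infty$ is also respected). If it is odd, then by (b) both $D$ and $H$ possess a vertex of odd out-degree in $L_m$; in particular both odd heights are finite and at least $m$. By (a), the vertices of odd out-degree sitting in levels strictly above $m$ are the same in $D$ and in $H$. Since the odd height equals the maximum of $m$ and the largest level $>m$ containing a vertex of odd out-degree, it is therefore the same for $D$ and $H$.

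I do not expect a genuine obstacle here. The only delicate point — and the conceptual heart of the argument — is the bookkeeping of levels: the rows that a local complementation touches lie strictly below the level $m$ of its center, while the single row that a slide touches lies exactly in the level $m$ of $u$ and $w$; this is what keeps the out-degree parities at levels $\ge m$ either untouched or pinned to the parity of the distinguished vertex, which in turn makes the odd height — a ``highest odd level'' statistic — invariant.
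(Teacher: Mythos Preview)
Your proof is correct and rests on the same two ingredients as the paper's: the parity formulas for out-degrees under a local complementation and a slide, together with the observation that the affected rows lie in levels at most $m$ (strictly below $m$ for $D*v$, exactly $m$ for $D\slide uw$). Where you diverge is in the concluding step. The paper fixes the odd height $k$ of $D$, picks a witness $w\in L_k(D)$ of odd out-degree, and shows $w$ (or, in the slide case, its partner) remains odd in $H$; this yields the inequality $\text{odd height}(H)\ge \text{odd height}(D)$, and equality follows from the involution identities $(D*v)*v=D$ and $(D\slide uw)\slide uw=D$. You instead pivot on the level $m$ of the distinguished vertex and split on the parity of its out-degree, obtaining equality directly without invoking the involutions. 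Your route is a shade more self-contained; the paper's ``$\ge$ plus involution'' device is a reusable trick that also streamlines the arguments for the other invariants.
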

\begin{proof}
  Let $D$ be an acyclic digraph.
 Let $v$ be a vertex of $D$. Let $x,y$ be two distinct vertices
  having the same set of in-neighbors. Then
  \begin{align*}
    \deg^+_{D*v}(w)& \equiv
    \begin{cases}
      \deg^+_{D}(w)+\deg^+_D(v)&\pmod2   \quad\text{ if $w$ is an in-neighbor of $v$,}\\
      \deg^+_D(w)&\pmod 2 \quad\text{ otherwise.}
    \end{cases}
    \\
    \deg^+_{D\slide xy}(w)&\equiv
    \begin{cases}
      \deg^+_D(w)+\deg^+_D(x)&\pmod 2 \quad\text{ if }w=y,\\
      \deg^+_D(w)&\pmod 2 \quad\text{ otherwise.}
    \end{cases}
  \end{align*}
  If every vertex of $D$ has even out-degree,
  then clearly a local complementation and a slide do not create a vertex of
  odd out-degree.
  So we may assume that $D$ has a vertex of odd out-degree.
  Let $k$ be the odd height of $D$.
  Let $w$ be a vertex of odd out-degree in $L_k(D)$.

  Let us first consider $D*v$ for a vertex $v\in L_i(D)$.
  If $w$ is an in-neighbor of $v$ in $D$, then
  the out-degree of $w$ in $D*v$ is odd because
  $i>k$
  and $\deg^+_D(v)$ is even.
  If $w$ is not an in-neighbor of $v$ in $D$, then
  the out-degree of  $w$ in $D*v$ is again odd.
  This proves that the odd height of $D*v$ is at least the odd height
  of $D$. Since $(D*v)*v=D$, we conclude that $D$ and $D*v$ have the
  same odd height.

  Now we claim that $L_k(D\slide xy)$ has a vertex of odd out-degree in
  $D\Delta xy$.
  Suppose that the out-degree of $w$ in $D\slide xy$ is even.
  Then $w=y$, $x\in L_k(D)$, and the out-degree of $x$ in $D$ is odd.
  Since $(D\slide xy)\slide xy=D$, we conclude that $D$ and $D\slide
  xy$ have the same odd height.
\end{proof}

We observe that the above invariants have topological motivations.
For an acyclic digraph $D$, let $M(A_D)$ be the small cover
corresponding to $D$. Choi \cite{Choi-10} showed that $M(A_D)$ is orientable if and only if the out-degree of every vertex of $D$ is even; in other words, the odd height of $D$ is $\infty$. Hence, the odd height can be seen as a generalization of the orientability of real Bott manifolds. In addition, Ishida \cite{Ishida-10} proved that $M(A_D)$ is symplectic if and only if $V(D)$ can be partitioned into pairs of vertices so that two vertices in a pair have the same set of in-neighbors. Equivalently, the cardinality of each sibling group is even. Hence the number of sibling groups with a fixed size is also a generalization of the symplecticness of real Bott manifolds. It is easy to see that if the cardinality of each sibling group is even, then its odd height must be $\infty$. It is obvious with topological viewpoint as well, because every symplectic manifold is orientable.
We computed a few values of the numbers of $n$-dimensional orientable real Bott manifolds and symplectic manifolds up to diffeomorphism in Table~\ref{tab:Bott equivalence}.

We remark that the invariants discussed in this paper completely
classify all Bott equivalence classes up to $4$ vertices but not on  $5$ vertices.
One can easily check that two acyclic digraphs in Figure~\ref{fig:unfortune
  case} are not Bott equivalent but have the same set of invariants.
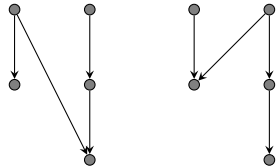
\begin{figure}
  \centering
  \tikzstyle{v}=[circle, draw, solid, fill=black!50, inner sep=0pt, minimum width=4pt]
  \tikzstyle{every edge}=[->,>=stealth,draw]
  \begin{tikzpicture}
    \node [v] (r3) {};
    \node [v] [above of=r3] (r2) {}
    edge[->] (r3);
    \node [v] [above of=r2] (r1) {}
    edge[->] (r2);
    \node [v] [left of=r2] (l2) {};
    \node [v] [above of=l2] (l1) {}
    edge [->] (l2)
    edge [->] (r3);
  \end{tikzpicture}
  \hspace{1cm}
  \begin{tikzpicture}
    \node [v] (r3) {};
    \node [v] [above of=r3] (r2) {}
    edge[->] (r3);
   \node [v] [left of=r2] (l2) {};
    \node [v] [above of=l2] (l1) {}
    edge [->] (l2);
    \node [v] [above of=r2] (r1) {}
    edge[->] (r2)
    edge[->] (l2);
  \end{tikzpicture}
  \caption{Theses acyclic digraphs are not Bott equivalent but have the identical set of invariants discussed in this paper}
  \label{fig:unfortune case}
\end{figure}

\section*{Acknowledgment}
We would like to thank B. D. McKay.
His computer program called \emph{nauty} to test graph isomorphism
has been very useful for our enumeration.
We also used the list of
non-isomorphic acyclic
digraphs up to 8 vertices posted on his website.

\end{document}